\newtheorem{theorem}{Theorem}[section]
\theoremstyle{plain}
\newtheorem{special case}{Special Case}
\newtheorem{definition}{Definition}[section]
\newtheorem{lemma}{Lemma}[section]
\numberwithin{equation}{section}
\begin{document}
\title[Brass-Stancu-Kantorovich operators on a Hypercube\ ]{%
Brass-Stancu-Kantorovich operators on a Hypercube$^{\ast }$\ }
\author{G\"{u}len Ba\c{s}canbaz-Tunca}
\address{Ankara University, Faculty of Science, Department of Mathematics,
Str. D\"{o}gol 06100, Be\c{s}evler, Ankara, Turkey}
\email{tunca@science.ankara.edu.tr}
\author{Heiner Gonska}
\address{University of Duisburg-Essen, Faculty of Mathematics, Forsthausweg
2, D-47057 Duisburg, Germany}
\email{heiner.gonska@uni-due.de and gonska.sibiu@gmail.com}
\keywords{Multivariate Kantorovich operator; Multivariate averaged modulus
of smoothness; Multivariate $K$-functional \\
2010 MSC: 41A36, 41A25, 26A45\\
$^{\ast }$This paper is an extension of a talk given in ICATA 2022.}
\dedicatory{\textit{This study is dedicated to Professor Ioan Ra\c{s}a on
the occasion of his 70th birthday}}

\begin{abstract}
We deal with multivariate Brass-Stancu-Kantorovich operators depending on a
non-negative integer parameter and defined on the space of all Lebesgue
integrable functions on a unit hypercube. We prove $L^{p}$-approximation and
provide estimates for the $L^{p}$-norm of the error of approximation in
terms of a multivariate averaged modulus of continuity and of the
corresponding $L^{p}$-modulus.
\end{abstract}

\maketitle
\author{ }

\section{Introduction and Historical Notes}

The fundamental functions of the well-known Bernstein operators are defined
by 
\begin{equation}
p_{n,k}(x)=\left \{ 
\begin{array}{l}
{{\binom{n}{k}}x^{k}(1-x)^{n-k};\  \ 0\leq k\leq n} \\ 
\vspace{-0.3cm} \\ 
{0;\  \  \ k<0\  \text{or }k>n}%
\end{array}%
\right. ,\, \, \,x\in \lbrack 0,1].  \label{Fundamental Polynomials}
\end{equation}%
In \cite{Stancu-1981}, using a probabilistic method, Stancu generalized
Bernstein's fundamental functions as%
\begin{equation}
w_{n,k,r}(x):=\left \{ 
\begin{array}{ll}
\left( 1-x\right) p_{n-r,k}\left( x\right) ; & 0\leq k<r \\ 
\left( 1-x\right) p_{n-r,k}\left( x\right) +xp_{n-r,k-r}\left( x\right) ; & 
r\leq k\leq n-r \\ 
xp_{n-r,k-r}\left( x\right) ; & n-r<k\leq n%
\end{array}%
\right. ,\ x\in \lbrack 0,1],\text{ }  \label{Stancu fundamental func.}
\end{equation}%
where $r$\ is a non-negative integer parameter,\ $n$ is any natural number
such that $n>2r$,\ for which each $p_{n-r,k}\ $is given by (\ref{Fundamental
Polynomials}), and therefore, constructed and studied Bernstein-type
positive linear operators as 
\begin{equation}
L_{n,r}\left( f;x\right) :=\sum \limits_{k=0}^{n}w_{n,k,r}(x)f\left( \frac{k%
}{n}\right) ,\  \ x\in \lbrack 0,1],  \label{Stancu op. compact form}
\end{equation}%
for $f\in C[0,1]$.\ In doing so Stancu was guided by an article of Brass 
\cite{Brass-71}. This is further discussed by Gonska \cite{Gonska-2000}.
Among others, estimates in terms of the second order modulus of smoothness
are given there for continuous functions.

It is clear that for $x\in \lbrack 0,1]\ $Stancu's fundamental functions in (%
\ref{Stancu fundamental func.})\ satisfy\ 
\begin{equation*}
w_{n,k,r}(x)\geq 0\text{ and }\sum_{k=0}^{n}w_{n,k,r}(x)=1,
\end{equation*}%
hence the operators $L_{n,r}\ $can be expressed as

\begin{equation}
L_{n,r}\left( f;x\right) :=\sum \limits_{k=0}^{n-r}p_{n-r,k}\left( x\right) 
\left[ \left( 1-x\right) f\left( \frac{k}{n}\right) +xf\left( \frac{k+r}{n}%
\right) \right] ,  \label{Stancu op}
\end{equation}%
are defined for $n \ge r$ and satisfy the end point interpolation $%
L_{n,r}\left( f;0\right) =f\left( 0\right) ,\ L_{n,r}\left( f;1\right)
=f\left( 1\right) $. It thus seems to be justified to call the $L_{n,r}$
Brass-Stancu-Bernstein (BSB) operators.

In \cite{Stancu-1983} Stancu gave uniform convergence $\lim_{n\rightarrow
\infty }L_{n,r}\left( f\right) =$ $f\ $on $[0,1]\ $for $f\in C[0,1]$ and
presented an expression for the remainder $R_{n,r}(f;x)$ of the
approximation formula $f(x)=L_{n,r}(f;x)+R_{n,r}(f;x)\ $by means of second
order divided differences and also obtained an integral representation for
the remainder. Moreover, the author estimated the order of approximation by
the operators $L_{n,r}\left( f\right) $ via the classical modulus of
continuity. He also studied the spectral properties of $L_{n,r}$.

In the cases $r=0\ $and $r=1$, the operators $L_{n,r}\ $reduce to the
classical Bernstein operators\ $B_{n}$, i.e., 
\begin{equation*}
B_{n}\left( f;x\right) =\sum \limits_{k=0}^{n}p_{n,k}(x)f\left( \frac{k}{n}%
\right) .
\end{equation*}

What also has to be mentioned: Stancu himself in his 1983 paper observed
that "we can optimize the error bound of the approximation of the function $%
f $ by means of $L_{n,r}f$ if we take $r=0$ or $r=1$, when the operator $%
L_{n,r}$ reduces to Bernstein's." So there is a shortcoming.

Since Bernstein polynomials are not appropriate for approximation of
discontinuous functions (see \cite[Section 1.9]{LOR.}), by replacing the
point evaluations $f\left( \frac{k}{n}\right) \ $with the integral means
over small intervals around the knots $\frac{k}{n}$, Kantorovich \cite%
{Kantorovich} generalized the Bernstein operators as 
\begin{equation}
K_{n}\left( f;x\right) =\sum \limits_{k=0}^{n}p_{n,k}\left( x\right) \left(
n+1\right) \int \limits_{\frac{k}{n+1}}^{\frac{k+1}{n+1}}f\left( t\right)
dt,\  \  \ x\in \lbrack 0,1],\ n\in 
\mathbb{N}
,  \label{Berns-Kant.}
\end{equation}%
for Lebesgue integrable functions$\ f\ $on $[0,1]$.

On p. 239 of his mathematical memoirs \cite{Kant-1987} Kantorovich writes:
"While I was waiting for a student who was late, I was looking over vol.
XIII of Fundamenta Math. and saw in it a note from the Moscow Mathematician
Khlodovskii related to Bernstein polynomials. In it I first caught sight of
Bernstein polynomials, which he proposed in 1912 for an elementary proof of
the well known Weierstrass theorem ... I at once wondered if it is not
possible in these polynomials to change the values of the function at
certain points into the more stable average of the function in the
corresponding interval. It turned out that this was possible, and the
polynomials could be written in such a form not only for a continuous
function but also for any Lebesgue-summable function."

Lorentz \cite{LOR.} proved that $\lim \limits_{n\rightarrow \infty
}\left
\Vert K_{n}(f)-f\right \Vert _{p}=0, f\in L^{p}[0,1],\ 1\leq
p<\infty $.

There are a lot of articles dealing with classical Kantorovich operators,
and, in particular, their degree of approximation and the importance of
second order moduli of different types. See, e.g., the work of Berens and
DeVore \cite{BeDe1976}, \cite{BeDe1978}, Swetits and Wood \cite{SwWo1983}
and Gonska and Zhou \cite{GoZh1995}. It is beyond the scope of this note to
further discuss this matter. As further work on the classical case here we
only mention the 1976 work of M\"{u}ller \cite{Muller 1976}, Maier \cite%
{Maier}, and Altomare et al. \cite{Altomare et al. 2010}, see also the
references therein.

Similarly to Kantorovich operators Bodur et al. \cite{Bodur Bostanci B-T}
constructed a Kantorovich type modification of BSB operators as 
\begin{equation}
K_{n,r}\left( f;x\right) :=\sum_{k=0}^{n}w_{n,k,r}(x)\left( \left(
n+1\right) \int \limits_{\frac{k}{n+1}}^{\frac{k+1}{n+1}}f\left( t\right)
dt\right) ,\  \  \ x\in \lbrack 0,1],  \label{Knr*}
\end{equation}%
for $f\in L^{1}\left[ 0,1\right] $, where $r\ $is a non-negative integer
parameter,\ $n$ is a natural number such that $n>2r$ and $w_{n,k,r}(x)\ $are
given by (\ref{Stancu fundamental func.}). And, it was shown that \textit{If 
}$f\in L^{p}[0,1],\ 1\leq p<\infty $,\  \textit{then} $\lim
\limits_{n\rightarrow \infty }\left \Vert K_{n,r}(f)-f\right \Vert _{p}=0$.
In addition, it was obtained that each\textit{\ }$K_{n,r}\ $is variation
detracting as well \cite{Bodur Bostanci B-T}. Throughout the paper, we shall
call the operators $K_{n,r}\ $given by (\ref{Knr*})
"Brass-Stancu-Kantorovich", BSK operators.

Notice that from the definition of $w_{n,k,r}$, $K_{n,r}\left( f;x\right) \ $%
can be expressed as 
\begin{eqnarray}
&&K_{n,r}\left( f;x\right)  \label{Knr* explicit} \\
&=&\sum \limits_{k=0}^{n-r}p_{n-r,k}\left( x\right) \left( n+1\right) \left[
\left( 1-x\right) \int \limits_{\frac{k}{n+1}}^{\frac{k+1}{n+1}}f\left(
t\right) dt+x\int \limits_{\frac{k+r}{n+1}}^{\frac{k+r+1}{n+1}}f\left(
t\right) dt\right]  \notag
\end{eqnarray}%
and in the cases $r=0$ and $r=1\ $they reduce to the Kantorovich operators; $%
K_{n,0}=K_{n,1}=K_{n}\ $given by (\ref{Berns-Kant.}). Again they are defined
for all $n \ge r$.

MULTIVARIATE SITUATION

Some work has been done in the multivariate setting for BSB and BSK
operators. For the standard simplex this was done, e.g., by Yang, Xiong and
Cao \cite{Yang Xiong Cao} and Cao \cite{Cao}, For example, Cao proved that
multivariate Stancu operators preserve the properties of multivariate moduli
of continuity and obtained the rate of convergence with the help of
Ditzian-Totik's modulus of continuity.

In this work, motivated by the work Altomare et al. \cite{Altomare et al.
2017}, we deal with a multivariate extension of the BSK operators on a $d$%
-dimensional unit hypercube and we study $L^{p}$ -approximation by these
operators. For the rate of convergence we provide an estimate in terms of
the so called first order multivariate $\tau$-modulus, a quantity coming
from the Bulgarian school of Approximation Theory. Also, inspired by M\"{u}%
ller's approach in \cite{Muller}, we give estimates for differentiable
functions and such in terms of the $L^{p}$-modulus of smoothness, using
properties of the $\tau$-modulus. Here the work of Quak \cite{Qua1985}, \cite%
{Quak} was helpful.

\section{Preliminaries}

Consider the space $%
\mathbb{R}
^{d},\ d\in 
\mathbb{N}
$. Let $\left \Vert \mathbf{x}\right \Vert _{\infty }\ $denote the $\max $%
-norm\ of a point$\  \mathbf{x}=\left( x_{1},\ldots ,x_{d}\right) \in 
\mathbb{R}
^{d};$%
\begin{equation*}
\left \Vert \mathbf{x}\right \Vert _{\infty }:=\left \Vert \mathbf{x}\right
\Vert _{\max }=\max \limits_{i\in \left \{ 1,\ldots ,d\right \} }\left \vert
x_{i}\right \vert
\end{equation*}%
and let $\mathbf{1}$ denote the constant function $\mathbf{1}:%
\mathbb{R}
^{d}\rightarrow 
\mathbb{R}
\ $such that $\mathbf{1}\left( \mathbf{x}\right) =1\ $for $\mathbf{x}\in 
\mathbb{R}
^{d}$.$\ $And, for each $j=1,\ldots ,d$, let 
\begin{equation*}
pr_{j}:%
\mathbb{R}
^{d}\rightarrow 
\mathbb{R}%
\end{equation*}%
stand for the\ $j$th coordinate function defined for $\mathbf{x}\in 
\mathbb{R}
^{d}\ $by\ 
\begin{equation*}
pr_{j}\left( \mathbf{x}\right) =x_{j}.
\end{equation*}

\begin{definition}
A multi-index is a $d$-tuple $\mathbf{\alpha }=\left( \alpha _{1},\ldots
,\alpha _{d}\right) \ $of non-negative integers. Its norm (length) is the
quantity 
\begin{equation*}
\left \vert \mathbf{\alpha }\right \vert =\sum \limits_{i=1}^{d}\alpha _{i}.
\end{equation*}%
\ The differential operator $D^{\mathbf{\alpha }}\ $is defined by%
\begin{equation*}
D^{\mathbf{\alpha }}f=D_{1}^{\alpha _{1}}\cdots D_{d}^{\alpha _{d}}f,
\end{equation*}%
where $D_{i},\ i=1,\ldots ,d$,$\ $is the corresponding partial derivative
operator$\ $(see \cite[p. 335]{Bennett-Sharpley}).
\end{definition}

Throughout the paper $Q_{d}:=[0,1]^{d},\ d\in 
\mathbb{N}
$,\ will denote the $d$-dimensional unit hypercube and we consider the space%
\begin{equation*}
L^{p}\left( Q_{d}\right) =\left \{ f\ :Q_{d}\rightarrow 
\mathbb{R}
\mid f\text{ }p\text{-integrable on }Q_{d}\right \} ,\ 1\leq p<\infty \text{,%
}
\end{equation*}%
with the standard norm $\left \Vert .\right \Vert _{p}$. Recall the
following definition of the usual $L^{p}$-modulus of smoothness of first
order:

\begin{definition}
Let $f\in L^{p}\left( Q_{d}\right) ,\ 1\leq p<\infty $, $\mathbf{h}\in 
\mathbb{R}
^{d}\ $and $\delta >0$. The modulus of smoothness of the first order for the
function $f$\ and step $\delta \ $in $L^{p}$-norm is given by%
\begin{equation*}
\omega _{1}\left( f;\delta \right) _{p}=\sup_{0<\left \Vert \mathbf{h}\right
\Vert _{\infty }\leq \delta }\left( \int \limits_{Q_{d}}\left \vert f\left( 
\mathbf{x}+\mathbf{h}\right) -f\left( \mathbf{x}\right) \right \vert ^{p}d%
\mathbf{x}\right) ^{1/p}
\end{equation*}%
if$\  \mathbf{x},\mathbf{x}+\mathbf{h}\in Q_{d}$ \cite{Quak}.
\end{definition}

Let $M\left( Q_{d}\right) :=\left \{ f\mid f\text{ bounded and measurable on 
}Q_{d}\right \} $. Below, we present the concept of the first order averaged
modulus of smoothness\textit{.}

\begin{definition}
\textit{Let} $f\in M\left( Q_{d}\right) ,\  \mathbf{h}\in 
\mathbb{R}
^{d}\ $\textit{and} $\delta >0$. \textit{The multivariate averaged modulus
of smoothness, or} $\tau $-\textit{modulus, of the first order for function} 
$f$ \textit{and step} $\delta \ $\textit{in }$L^{p}$\textit{-norm is given by%
}%
\begin{equation*}
\tau _{1}\left( f,\delta \right) _{p}:=\left \Vert \omega _{1}\left(
f,.;\delta \right) \right \Vert _{p},\ 1\leq p<\infty ,
\end{equation*}%
\textit{where}%
\begin{equation*}
\begin{array}{l}
\omega _{1}\left( f,\mathbf{x};\delta \right) = \\ 
\sup \left \{ \left \vert f\left( \mathbf{t}+\mathbf{h}\right) -f\left( 
\mathbf{t}\right) \right \vert :\mathbf{t},\mathbf{t}+\mathbf{h}\in Q_{d},\
\left \Vert \mathbf{t}-\mathbf{x}\right \Vert _{\infty }\leq \frac{\delta }{2%
},\left \Vert \mathbf{t}+\mathbf{h}-\mathbf{x}\right \Vert _{\infty }\leq 
\frac{\delta }{2}\right \}%
\end{array}%
\end{equation*}%
\textit{is the multivariate local modulus of smoothness of first order for
the function }$f\ $\textit{at the point} $\mathbf{x}\in Q_{d}\ $\textit{and
for step} $\delta $. \cite{Quak}.
\end{definition}

For our future purposes, we need the following properties of first order
multivariate averaged modulus of smoothness:

For $f\in M\left( Q_{d}\right) ,\ 1\leq p<\infty \ $and $\delta ,\lambda
,\gamma \in 
\mathbb{R}
^{+}$, there hold

\begin{enumerate}
\item[$\protect \tau _{1})$] $\tau _{1}\left( f,\delta \right) _{p}\leq \tau
_{1}\left( f,\lambda \right) _{p}\ $for $0<\delta \leq \lambda ,$

\item[$\protect \tau _{2})$] $\tau _{1}\left( f,\lambda \delta \right)
_{p}\leq \left( 2\left \lfloor \lambda \right \rfloor +2\right) ^{d+1}\tau
_{1}\left( f,\delta \right) _{p}$, where $\left \lfloor \lambda
\right
\rfloor $ is the greatest integer that does not exceed $\lambda ,$

\item[$\protect \tau _{3})$] $\tau _{1}\left( f,\delta \right) _{p}\leq 2\sum
\limits_{\left \vert \mathbf{\alpha }\right \vert \geq 1}\delta
^{\left
\vert \mathbf{\alpha }\right \vert }\left \Vert D^{\mathbf{\alpha }%
}f\right
\Vert _{p},\  \alpha _{i}=0\ $or$\ 1$,\ if\ $D^{\mathbf{\alpha }%
}f\in L^{p}\left( Q_{d}\right) \ $for all multi-indices $\mathbf{\alpha }\ $%
with $\left \vert \mathbf{\alpha }\right \vert \geq 1$ and$\  \alpha _{i}=0\ $%
or$\ 1 $\ (see \cite{Popov-Khristov} or \cite{Quak}).
\end{enumerate}

For a detailed knowledge concerning averaged modulus of smoothness, we refer
to the book of Sendov and Popov \cite{Sendov-Popov}.

Now, consider the Sobolev space $W_{1}^{p}\left( Q_{d}\right) \ $of
functions $f\in L^{p}\left( Q_{d}\right) ,\ 1\leq p<\infty $,\ with
(distributional) derivatives$\ D^{\mathbf{\alpha }}f$ belong to $L^{p}\left(
Q_{d}\right) $,\ where $\left \vert \mathbf{\alpha }\right \vert \leq 1$,
with the seminorm\ 
\begin{equation*}
\left \vert f\right \vert _{W_{1}^{p}}=\sum \limits_{\left \vert \mathbf{%
\alpha }\right \vert =1}\left \Vert D^{\mathbf{\alpha }}f\right \Vert _{p}
\end{equation*}%
(see \cite[p. 336]{Bennett-Sharpley}). Recall that for all $f\in L^{p}\left(
Q_{d}\right) \ $the $K$-functional, in\ $L^{p}$-norm, is defined as 
\begin{equation}
K_{1,p}\left( f;t\right) :=\inf \left \{ \left \Vert f-g\right \Vert
_{p}+t\left \vert g\right \vert _{W_{1}^{p}}:g\in W_{1}^{p}\left(
Q_{d}\right) \right \} \  \  \  \left( t>0\right) .  \label{K-functional}
\end{equation}%
$K_{1,p}\left( f;t\right) \ $is equivalent with the usual first order
modulus of smoothness of $f$, $\omega _{1}\left( f;t\right) _{p}$;$\ $%
namely, there are positive constants $c_{1}$ and $c_{2}\ $such that 
\begin{equation}
c_{1}K_{1,p}\left( f;t\right) \leq \omega _{1}\left( f;t\right) _{p}\leq
c_{2}K_{1,p}\left( f;t\right) \  \  \  \left( t>0\right)  \label{equivalence}
\end{equation}%
holds for all $f\in L^{p}\left( Q_{d}\right) $ (see \cite[Formula 4.42 in p.
341]{Bennett-Sharpley}).

The following result due to Quak \cite{Quak} is an upper estimate for the $%
L^{p}$-norm of the approximation error by the multivariate positive linear
operators in terms of the first order averaged modulus of smoothness. Note
that this idea was used first by Popov for the univariate case in \cite%
{Popov}.

\begin{theorem}
\label{Quak Thm.}Let $L:M\left( Q_{d}\right) \rightarrow M\left(
Q_{d}\right) \ $be a positive linear operator that preserves the constants.
Then for every $f\in M\left( Q_{d}\right) \ $and $1\leq p<\infty $, the
following estimate holds:%
\begin{equation*}
\left \Vert L(f)-f\right \Vert _{p}\leq C\tau _{1}\left( f,\sqrt[2d]{A}%
\right) _{p},
\end{equation*}%
where $C$ is a positive constant and%
\begin{equation*}
A:=\sup \left \{ L\left( \left( pr_{i}\circ \psi _{\mathbf{x}}\right) ^{2};%
\mathbf{x}\right) :i=1,\ldots ,d,\  \mathbf{x}\in Q_{d}\right \} ,
\end{equation*}%
in which $\psi _{\mathbf{x}}\left( \mathbf{y}\right) :=\mathbf{y}-\mathbf{x}%
\ $for fixed $\mathbf{x}\in Q_{d}\ $and for every $\mathbf{y}\in Q_{d}\ $and 
$A\leq 1\ $\cite{Quak}.
\end{theorem}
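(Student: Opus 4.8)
The plan is to prove the estimate first pointwise in $\mathbf{x}$ and then integrate. Since $L$ is linear and reproduces constants, for each fixed $\mathbf{x}\in Q_d$ one has $L(f;\mathbf{x})-f(\mathbf{x})=L\!\left(f-f(\mathbf{x})\mathbf{1};\mathbf{x}\right)$, where $f-f(\mathbf{x})\mathbf{1}$ denotes the function $\mathbf{y}\mapsto f(\mathbf{y})-f(\mathbf{x})$. Positivity of $L$ then yields the basic majorization
\[
\left\vert L(f;\mathbf{x})-f(\mathbf{x})\right\vert \le L\!\left(\left\vert f(\cdot)-f(\mathbf{x})\right\vert;\mathbf{x}\right).
\]
Everything reduces to estimating the right-hand side by local moduli of smoothness of $f$ near $\mathbf{x}$ and, after raising to the $p$-th power and integrating in $\mathbf{x}$, by $\tau_1(f,\delta)_p$.

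The heart of the argument is a pointwise inequality bounding the increment $\left\vert f(\mathbf{y})-f(\mathbf{x})\right\vert$ through local moduli. First I would split $Q_d$ into the \emph{near} region $\left\{\mathbf{y}:\left\Vert\mathbf{y}-\mathbf{x}\right\Vert_\infty\le\delta\right\}$ and its complement. On the near region the increment is dominated directly by $\omega_1(f,\mathbf{x};2\delta)$, taking $\mathbf{t}=\mathbf{x}$, $\mathbf{t}+\mathbf{h}=\mathbf{y}$ in the definition of the local modulus. On the \emph{far} region I would join $\mathbf{x}$ to $\mathbf{y}$ by a coordinatewise staircase path and cover it by overlapping boxes of side $\delta$; telescoping across the roughly $\left\Vert\mathbf{y}-\mathbf{x}\right\Vert_\infty/\delta$ steps produces a bound of the form $\bigl(1+\left\Vert\mathbf{y}-\mathbf{x}\right\Vert_\infty/\delta\bigr)$ times a supremum of local moduli at intermediate points. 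Substituting this into the majorization, applying $L$, and invoking the Cauchy--Schwarz inequality together with the moment estimate
\[
L\!\left(\left\Vert\psi_{\mathbf{x}}\right\Vert_\infty^2;\mathbf{x}\right)\le\sum_{i=1}^d L\!\left(\left(pr_i\circ\psi_{\mathbf{x}}\right)^2;\mathbf{x}\right)\le d\,A
\]
lets me absorb the distance factor into a quantity governed by $A$.

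To pass to the $L^p$-norm I would raise the pointwise estimate to the power $p$, integrate over $\mathbf{x}\in Q_d$, and recognize the resulting integrals of local moduli as powers of $\tau_1(f,c\delta)_p$, where $c$ is a constant coming from the covering. The scaling property of the $\tau$-modulus (with its factor $(2\lfloor\lambda\rfloor+2)^{d+1}$) then converts $\tau_1(f,c\delta)_p$ into a constant multiple of $\tau_1(f,\delta)_p$, while monotonicity of $\tau_1$ permits enlarging the step up to the stated value. Balancing the moment term against the local-modulus term fixes the choice of $\delta$, and the hypothesis $A\le1$ keeps $\sqrt[2d]{A}\le1$, so that the step remains an admissible argument of the $\tau$-modulus; note that for $d=1$ this recovers Popov's univariate step $\sqrt{A}$.

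The main obstacle I expect is twofold. First, the multivariate chaining: unlike the one-dimensional case, connecting $\mathbf{x}$ to $\mathbf{y}$ requires controlling a $d$-dimensional staircase, and one must ensure the intermediate local moduli can be handled uniformly and then integrated by a Fubini-type argument without losing measurability or inflating constants. Second, the bookkeeping that delivers precisely the exponent $1/(2d)$: tracking how the covering count, the Cauchy--Schwarz step, and the factor $(2\lfloor\lambda\rfloor+2)^{d+1}$ from the scaling property combine is delicate, and it is here that the generalization of the univariate $\sqrt{A}$ to the multivariate $\sqrt[2d]{A}$ is actually earned.
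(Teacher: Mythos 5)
First, a point of reference: the paper does not prove this theorem at all --- it is quoted from Quak \cite{Quak}, with Popov \cite{Popov} as the univariate antecedent --- so your attempt can only be measured against Quak's published argument and judged on its own merits. Your opening is the correct and standard one: since $L$ is linear, positive and reproduces constants, $\left\vert L(f;\mathbf{x})-f(\mathbf{x})\right\vert \le L\left(\left\vert f(\cdot)-f(\mathbf{x})\right\vert ;\mathbf{x}\right)$, and the task is to convert the averaged increments into $\tau_{1}\left(f,\sqrt[2d]{A}\right)_{p}$. Your moment estimate $L\left(\left\Vert \psi_{\mathbf{x}}\right\Vert_{\infty}^{2};\mathbf{x}\right)\le dA$ is also correct, and your remark that $d=1$ recovers Popov's step $\sqrt{A}$ is apt.

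However, there is a genuine gap, and it sits exactly where you place it yourself: the ``bookkeeping that delivers precisely the exponent $1/(2d)$'' is not a detail to be deferred --- it is the theorem, and your proposal labels it ``delicate'' and stops. Worse, the specific mechanisms you name point the wrong way. Cauchy--Schwarz inside $L$ leaves a factor $L\left(\Omega^{2};\mathbf{x}\right)^{1/2}$, where $\Omega$ is a supremum of local moduli over chain points depending jointly on $\mathbf{x}$ and $\mathbf{y}$; after raising to the $p$-th power and integrating, such squared suprema are not bounded by $\tau_{1}(f,\delta)_{p}$ for general $1\le p<\infty$, and a ``Fubini-type argument'' does not repair this, since the intermediate points are not tied to a fixed grid. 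Moreover, balancing $L\left(\left(1+\left\Vert\psi_{\mathbf{x}}\right\Vert_{\infty}/\delta\right)^{2};\mathbf{x}\right)\le C\left(1+dA/\delta^{2}\right)$ against the modulus term naturally fixes $\delta\sim\sqrt{A}$ --- the univariate exponent --- and nothing in your outline explains why the step must instead be enlarged to $A^{1/(2d)}$ (note $A^{1/(2d)}\ge A^{1/2}$ since $A\le 1$). What the actual proof uses is different machinery: bound the increment at the point $\mathbf{x}$ itself via $\left\vert f(\mathbf{y})-f(\mathbf{x})\right\vert\le\omega_{1}\left(f,\mathbf{x};2\left\Vert\mathbf{y}-\mathbf{x}\right\Vert_{\infty}\right)$ (take $\mathbf{t}=\mathbf{x}$ in the definition of the local modulus), decompose into shells $m\delta<\left\Vert\mathbf{y}-\mathbf{x}\right\Vert_{\infty}\le(m+1)\delta$, control the mass of each shell by the Chebyshev-type estimate $L\left(\chi_{\left\{\left\Vert\mathbf{y}-\mathbf{x}\right\Vert_{\infty}>m\delta\right\}};\mathbf{x}\right)\le dA/(m\delta)^{2}$ coming from the second moments, and pass from integrated local moduli on a partition of $Q_{d}$ into cubes of side $\sim A^{1/(2d)}$ to $\tau_{1}$ via the summation lemmas of Popov--Khristov \cite{Popov-Khristov} and Sendov--Popov \cite{Sendov-Popov}. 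It is the interplay between the $\delta^{-d}$ cube/shell count and the $(m\delta)^{-2}$ decay that forces $\delta^{2d}\sim A$; carrying out that summation (and keeping the constants uniform, which is where the $d=1$ case is in fact the most delicate) is precisely the content your proposal omits.
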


\section{Multivariate BSK-Operators}

In this section, motivated by the works of Altomare et al. \cite{Altomare et
al. 2010} and Altomare et al. \cite{Altomare et al. 2017}, we consider the
multivariate extension of BSK-operators on $L^{p}\left( Q_{d}\right) $ and
study approximation properties of these operators in $L^{p}$-norm. We
investigate the rate of the convergence in terms of the first order $\tau $%
-modulus and the usual $L^{p}$-modulus of smoothness of the first order.

Let $r$ be a given non-negative integer. For any\ $n\in 
\mathbb{N}
\ $such that $n>2r,\  \mathbf{k}=\left( k_{1},\ldots ,k_{d}\right) \in
\left
\{ 0,\ldots ,n\right \} ^{d}\ $and $\mathbf{x}=\left( x_{1},\ldots
,x_{d}\right) \in Q_{d}$,\ we set 
\begin{equation}
w_{n,\mathbf{k},r}(\mathbf{x}):=\prod \limits_{i=1}^{d}w_{n,k_{i},r}(x_{i}),
\label{Compact Stancu Fund.}
\end{equation}%
where, $w_{n,k_{i},r}(x_{i})\ $is Stancu's fundamental function given by (%
\ref{Stancu fundamental func.}), written for each $i=1,\ldots ,d,$ $0\leq
k_{i}\leq n$\ and $x_{i}\in \lbrack 0,1]$.$\ $Thus, for $\mathbf{x}\in Q_{d}$%
,\ we have 
\begin{equation}
w_{n,\mathbf{k},r}(\mathbf{x})\geq 0\  \text{and }\sum \limits_{\mathbf{k}\in
\left \{ 0,\ldots ,n\right \} ^{d}}w_{n,\mathbf{k},r}(\mathbf{x})=1.
\label{Stancu dist.}
\end{equation}

For$\ f\in L^{1}\left( Q_{d}\right) \ $and $\mathbf{x}=\left( x_{1},\ldots
,x_{d}\right) \in Q_{d}\ $we consider the following multivariate extension
of the BSK-operators $K_{n,r}\ $given by (\ref{Knr*}):%
\begin{equation*}
K_{n,r}^{d}\left( f;\mathbf{x}\right) =\sum \limits_{k_{1},\ldots
,k_{d}=0}^{n}\prod \limits_{i=1}^{d}w_{n,k_{i},r}(x_{i})\int
\limits_{Q_{d}}f\left( \frac{k_{1}+u_{1}}{n+1},\ldots ,\frac{k_{d}+u_{d}}{n+1%
}\right) du_{1}\cdots du_{d}.
\end{equation*}%
Notice that from (\ref{Compact Stancu Fund.}), and denoting, as usual, any $%
f\in L^{1}\left( Q_{d}\right) \ $of $\mathbf{x}=\left( x_{1},\ldots
,x_{d}\right) \in Q_{d}\ $by $f\left( \mathbf{x}\right) =f\left(
x_{1},\ldots ,x_{d}\right) $,\ we can express these operators in compact
form as 
\begin{equation}
K_{n,r}^{d}\left( f;\mathbf{x}\right) =\sum \limits_{\mathbf{k}\in \left \{
0,\ldots ,n\right \} ^{d}}w_{n,\mathbf{k},r}(\mathbf{x})\int
\limits_{Q_{d}}f\left( \frac{\mathbf{k}+\mathbf{u}}{n+1}\right) d\mathbf{u}.
\label{Mvariate S-K}
\end{equation}%
It is clear that multivariate BSK-operators are positive and linear and the
cases $r=0$ and $1$ give the multivariate Kantorovich operators on the
hypercube $Q_{d}$, which can be captured from \cite{Altomare et al. 2010} as
a special case.

\begin{lemma}
\label{coordinate funcs.}For $\mathbf{x}\in Q_{d}$, we have%
\begin{eqnarray*}
K_{n,r}^{d}\left( \mathbf{1};\mathbf{x}\right) &=&1, \\
K_{n,r}^{d}\left( pr_{i};\mathbf{x}\right) &=&\frac{n}{n+1}x_{i}+\frac{1}{%
2\left( n+1\right) }, \\
K_{n,r}^{d}\left( pr_{i}^{2};\mathbf{x}\right) &=&\frac{n^{2}}{\left(
n+1\right) ^{2}}\left[ x_{i}^{2}+\left( 1+\frac{r\left( r-1\right) }{n}%
\right) \frac{x_{i}\left( 1-x_{i}\right) }{n}\right] \\
&&+\frac{3nx_{i}+1}{3\left( n+1\right) ^{2}},
\end{eqnarray*}%
for $i=1,\ldots ,d.$
\end{lemma}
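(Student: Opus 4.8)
The plan is to exploit the tensor-product structure of $K_{n,r}^{d}$ together with the partition-of-unity property (\ref{Stancu dist.}), which collapses every assertion to a one-dimensional moment computation for the univariate fundamental functions $w_{n,k,r}$. The point is that $\mathbf{1}$, $pr_{i}$ and $pr_{i}^{2}$ each depend on a single coordinate. Writing the multivariate sum in (\ref{Mvariate S-K}) as a product over the $d$ factors and observing that, for every index $j\neq i$,
\[
\sum_{k_{j}=0}^{n} w_{n,k_{j},r}(x_{j})\int_{0}^{1} 1\,du_{j} = \sum_{k_{j}=0}^{n} w_{n,k_{j},r}(x_{j}) = 1
\]
by (\ref{Stancu dist.}), all factors except the $i$-th one reduce to $1$. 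Thus $K_{n,r}^{d}(\mathbf{1};\mathbf{x})=1$ is immediate, while $K_{n,r}^{d}(pr_{i};\mathbf{x})$ and $K_{n,r}^{d}(pr_{i}^{2};\mathbf{x})$ equal the univariate quantities
\[
\sum_{k=0}^{n} w_{n,k,r}(x_{i})\int_{0}^{1}\frac{k+u}{n+1}\,du,
\qquad
\sum_{k=0}^{n} w_{n,k,r}(x_{i})\int_{0}^{1}\Big(\frac{k+u}{n+1}\Big)^{2}du .
\]

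Next I would perform the inner integrals, which are elementary: $\int_{0}^{1}(k+u)\,du=k+\tfrac12$ and $\int_{0}^{1}(k+u)^{2}\,du=k^{2}+k+\tfrac13$. This reduces the problem to evaluating the two discrete moments $\sum_{k}k\,w_{n,k,r}(x)$ and $\sum_{k}k^{2}w_{n,k,r}(x)$. Rather than manipulating the three-case definition (\ref{Stancu fundamental func.}) directly, I would read these off from the univariate BSB operator through the identity $\sum_{k}(k/n)^{s}w_{n,k,r}(x)=L_{n,r}(pr_{1}^{s};x)$ and use the compact representation (\ref{Stancu op}),
\[
L_{n,r}(f;x)=\sum_{k=0}^{n-r}p_{n-r,k}(x)\Big[(1-x)f\big(\tfrac{k}{n}\big)+x\,f\big(\tfrac{k+r}{n}\big)\Big],
\]
into which I substitute $f(t)=t$ and $f(t)=t^{2}$.

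For the first moment, $f(t)=t$ gives $(1-x)k+x(k+r)=k+rx$ inside the sum, so with $\sum_{k}p_{n-r,k}(x)=1$ and $\sum_{k}k\,p_{n-r,k}(x)=(n-r)x$ one obtains $L_{n,r}(pr_{1};x)=\tfrac{1}{n}\big[(n-r)x+rx\big]=x$, hence $\sum_{k}k\,w_{n,k,r}(x)=nx$; this already yields the stated formula for $K_{n,r}^{d}(pr_{i};\mathbf{x})$. The main computational obstacle is the second moment: $f(t)=t^{2}$ produces $(1-x)k^{2}+x(k+r)^{2}=k^{2}+2rkx+r^{2}x$, so one needs the degree-$(n-r)$ Bernstein moments $\sum_{k}k\,p_{n-r,k}(x)=(n-r)x$ and $\sum_{k}k^{2}p_{n-r,k}(x)=(n-r)(n-r-1)x^{2}+(n-r)x$. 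Collecting the $x^{2}$-coefficient and simplifying $(n-r)(n-r-1)+2r(n-r)=(n-r)(n+r-1)=n^{2}-n-r(r-1)$ is exactly where the characteristic factor $r(r-1)$ enters. Finally, assembling $K_{n,r}^{d}(pr_{i}^{2};\mathbf{x})=\frac{1}{(n+1)^{2}}\big[n^{2}L_{n,r}(pr_{1}^{2};x_{i})+nL_{n,r}(pr_{1};x_{i})+\tfrac13\big]$ and regrouping the $x_{i}^{2}$- and $x_{i}$-coefficients reproduces the claimed expression, the term $\frac{x_{i}(1-x_{i})}{n}\big(1+\tfrac{r(r-1)}{n}\big)$ encoding precisely this $r$-dependence. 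The only genuine care required is the bookkeeping of the $r$-shift in the second moment; everything else follows directly from the tensor structure and the classical Bernstein moments.
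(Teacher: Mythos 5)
Your proof is correct. Note that the paper states this lemma \emph{without any proof} (it passes directly to Volkov's theorem), so there is no in-paper argument to compare against; your route is the natural one and, I would say, the intended one: the tensor-product structure plus the partition of unity $\sum_{\mathbf{k}}w_{n,\mathbf{k},r}(\mathbf{x})=1$ collapses everything to univariate moments of the BSB operator, which you extract from the compact representation $L_{n,r}(f;x)=\sum_{k=0}^{n-r}p_{n-r,k}(x)\left[(1-x)f\left(\frac{k}{n}\right)+xf\left(\frac{k+r}{n}\right)\right]$ rather than from the three-case definition of $w_{n,k,r}$. All the key identities check out: $\sum_{k}k\,w_{n,k,r}(x)=nx$ (since $(1-x)k+x(k+r)=k+rx$ and $\sum_k k\,p_{n-r,k}(x)=(n-r)x$), and for the second moment your coefficient bookkeeping $(n-r)(n-r-1)+2r(n-r)=(n-r)(n+r-1)=n^{2}-n-r(r-1)$ together with the $x$-coefficient $(n-r)+r^{2}=n+r(r-1)$ gives $L_{n,r}(pr_1^{2};x)=x^{2}+\left(1+\frac{r(r-1)}{n}\right)\frac{x(1-x)}{n}$; the final assembly via $\int_{0}^{1}(k+u)\,du=k+\frac12$ and $\int_{0}^{1}(k+u)^{2}\,du=k^{2}+k+\frac13$ then yields exactly $\frac{n}{n+1}x_{i}+\frac{1}{2(n+1)}$ and $\frac{n^{2}}{(n+1)^{2}}L_{n,r}(pr_1^{2};x_{i})+\frac{3nx_{i}+1}{3(n+1)^{2}}$ as claimed. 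The only implicit hypothesis is that the compact form of $L_{n,r}$ is valid, which the paper guarantees for $n\geq r$, comfortably covered by the standing assumption $n>2r$.
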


Taking this lemma into consideration, by the well-known theorem of Volkov 
\cite{Volkov}, we immediately get that

\begin{theorem}
\label{Unif. Approx.}Let $r$ be a non-negative fixed integer and $f\in
C\left( Q_{d}\right) $. Then $\lim \limits_{n\rightarrow
\infty}K_{n,r}^{d}\left( f\right) =f\ $uniformly on $Q_{d}.$
\end{theorem}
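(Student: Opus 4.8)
The plan is to derive the theorem as a direct consequence of the multivariate Bohman--Korovkin theorem in the form due to Volkov \cite{Volkov}. Recall that Volkov's criterion guarantees that a sequence of positive linear operators $L_n : C\left(Q_d\right) \to C\left(Q_d\right)$ satisfies $L_n(f) \to f$ uniformly on $Q_d$ for every $f \in C\left(Q_d\right)$, provided that uniform convergence to the identity already holds on the finite Korovkin test set
\[
\left\{ \mathbf{1},\ pr_1,\ \ldots,\ pr_d,\ \sum_{i=1}^d pr_i^2 \right\}.
\]
Since each $K_{n,r}^d$ is positive and linear and maps $C\left(Q_d\right)$ into itself, it suffices to verify these $d+2$ limits, which I would read off directly from the moments supplied by Lemma \ref{coordinate funcs.}.

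First, $K_{n,r}^d\left( \mathbf{1};\mathbf{x}\right) = 1$ for all $\mathbf{x}$, so convergence on $\mathbf{1}$ is exact. Next, for each coordinate function I would estimate
\[
\left| K_{n,r}^d\left( pr_i;\mathbf{x}\right) - x_i \right| = \left| \frac{1 - 2x_i}{2\left( n+1\right)} \right| \le \frac{1}{2\left( n+1\right)},
\]
a bound independent of $\mathbf{x}$ that tends to $0$, whence $K_{n,r}^d\left( pr_i\right) \to pr_i$ uniformly. For the quadratic test functions I would use the third identity of Lemma \ref{coordinate funcs.}: the leading term $\frac{n^2}{\left( n+1\right)^2}x_i^2$ converges uniformly to $x_i^2$, while every remaining term carries a factor $1/n$ or $1/\left( n+1\right)^2$ and, crucially, the quantity $\frac{r\left( r-1\right)}{n}$ vanishes because $r$ is a fixed integer. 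Summing over $i = 1, \ldots, d$ then yields $K_{n,r}^d\big(\sum_{i=1}^d pr_i^2\big) \to \sum_{i=1}^d pr_i^2$ uniformly on $Q_d$.

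With these limits in hand, Volkov's theorem applies verbatim and gives the claim. The argument is essentially mechanical; the only point deserving a moment's attention --- and the closest thing to an obstacle --- is confirming that every error term appearing in Lemma \ref{coordinate funcs.} is of order $1/n$ \emph{uniformly} in $\mathbf{x} \in Q_d$ for fixed $r$, so that the Korovkin convergence is genuinely uniform and not merely pointwise. Since $x_i$, $x_i^2$ and $x_i\left( 1-x_i\right)$ are all bounded on $[0,1]$ and $r$ does not grow with $n$, this uniformity is immediate.
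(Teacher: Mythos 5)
Your proposal is correct and follows exactly the paper's route: the authors likewise deduce the theorem immediately from Lemma \ref{coordinate funcs.} via Volkov's Korovkin-type theorem, only without writing out the moment estimates that you verify explicitly (all of which are accurate, including the uniform bound $\left| K_{n,r}^{d}\left( pr_{i};\mathbf{x}\right) - x_{i}\right| \leq \frac{1}{2\left( n+1\right) }$ and the $O(1/n)$ error terms for $pr_i^2$ with $r$ fixed). Your extra attention to uniformity in $\mathbf{x}$ is a welcome addition, but it is not a departure from the paper's argument.
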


Now, we need the following evaluations for the subsequent result: For $0\leq
x_{i}\leq 1,\ i=1,\ldots ,d$,\ we have%
\begin{eqnarray*}
\int \limits_{0}^{1}\left( 1-x_{i}\right) p_{n-r,k_{i}}\left( x_{i}\right)
dx_{i} &=&\binom{n-r}{k_{i}}\int \limits_{0}^{1}x_{i}^{k_{i}}\left(
1-x_{i}\right) ^{n-r-k_{i}+1}dx_{i} \\
&=&\frac{n-r-k_{i}+1}{\left( n-r+2\right) \left( n-r+1\right) }
\end{eqnarray*}%
when $0\leq k_{i}<r\ $and%
\begin{eqnarray*}
\int \limits_{0}^{1}x_{i}p_{n-r,k_{i}-r}\left( x_{i}\right) dx_{i} &=&\binom{%
n-r}{k_{i}-r}\int \limits_{0}^{1}x_{i}^{k_{i}-r+1}\left( 1-x_{i}\right)
^{n-k_{i}}dx_{i} \\
&=&\frac{k_{i}-r+1}{\left( n-r+2\right) \left( n-r+1\right) }
\end{eqnarray*}%
when $n-r<k_{i}\leq n$.\ Thus, from (\ref{Fundamental Polynomials}) and (\ref%
{Stancu fundamental func.}), it follows that%
\begin{equation}
\int \limits_{0}^{1}w_{n,k_{i},r}(x_{i})dx_{i}=\left \{ 
\begin{array}{ll}
\frac{n-r-k_{i}+1}{\left( n-r+2\right) \left( n-r+1\right) }; & 0\leq k_{i}<r
\\ 
\frac{n-2r+2}{\left( n-r+2\right) \left( n-r+1\right) }; & r\leq k_{i}\leq
n-r \\ 
\frac{k_{i}-r+1}{\left( n-r+2\right) \left( n-r+1\right) }; & n-r<k_{i}\leq n%
\end{array}%
\right. .  \label{integral of Stancu pol.}
\end{equation}%
Note that we can write the following estimates 
\begin{eqnarray}
n-r-k_{i}+1 &\leq &n-r+1\  \text{when }0\leq k_{i}<r,  \notag \\
n-2r+2 &\leq &n-r+1\  \text{when }r\leq k_{i}\leq n-r,  \notag \\
k_{i}-r+1 &\leq &n-r+1\  \text{when }n-r<k_{i}\leq n
\label{upper est. of the integrals}
\end{eqnarray}%
for each $i=1,\ldots ,d$,\ where in the middle term, we have used the
hypothesis $n>2r$. Making use of (\ref{upper est. of the integrals}), (\ref%
{integral of Stancu pol.}) and (\ref{Compact Stancu Fund.}), we obtain 
\begin{equation}
\int \limits_{Q_{d}}w_{n,\mathbf{k},r}(\mathbf{x})d\mathbf{x}=\prod
\limits_{i=1}^{d}\int \limits_{0}^{1}w_{n,k_{i},r}(x_{i})dx_{i}\leq \frac{1}{%
\left( n-r+2\right) ^{d}}.  \label{int.of Stancu pol. over hypercube}
\end{equation}

$L^{p}$-approximation by the sequence of the multivariate Stancu-Kantorovich
operators is presented in the following theorem.

\begin{theorem}
\label{Lp-approx.}Let $r$ be a non-negative fixed integer and $f\in
L^{p}\left( Q_{d}\right) ,\ 1\leq p<\infty $. Then\ $\lim
\limits_{n\rightarrow \infty }\left \Vert K_{n,r}^{d}(f)-f\right \Vert
_{p}=0 $.
\end{theorem}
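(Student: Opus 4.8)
The plan is to run the classical two-step argument for $L^{p}$-convergence of a uniformly bounded family of positive linear operators. First I would prove a uniform (in $n$) bound $\left\Vert K_{n,r}^{d}(h)\right\Vert _{p}\leq M\left\Vert h\right\Vert _{p}$ on $L^{p}(Q_{d})$; then, since Theorem~\ref{Unif. Approx.} already gives uniform---hence, on the finite measure space $Q_{d}$, also $L^{p}$---convergence for continuous functions, I would pass to general $f\in L^{p}(Q_{d})$ through the density of $C(Q_{d})$ in $L^{p}(Q_{d})$ and a triangle inequality.

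For the uniform bound I would start from the representation (\ref{Mvariate S-K}). Because $\int_{Q_{d}}d\mathbf{u}=1$ and $\sum_{\mathbf{k}}w_{n,\mathbf{k},r}(\mathbf{x})=1$ by (\ref{Stancu dist.}), two applications of Jensen's inequality (convexity of $t\mapsto \left\vert t\right\vert ^{p}$), first for the probability average over $\mathbf{k}$ and then for the integral mean over $\mathbf{u}$, yield the pointwise estimate
\[
\left\vert K_{n,r}^{d}(h;\mathbf{x})\right\vert ^{p}\leq \sum_{\mathbf{k}\in \left\{ 0,\ldots ,n\right\} ^{d}}w_{n,\mathbf{k},r}(\mathbf{x})\int_{Q_{d}}\left\vert h\left( \frac{\mathbf{k}+\mathbf{u}}{n+1}\right) \right\vert ^{p}d\mathbf{u}.
\]
Integrating in $\mathbf{x}$ over $Q_{d}$, interchanging sum and integrals by Tonelli's theorem, and inserting the estimate (\ref{int.of Stancu pol. over hypercube}) for $\int_{Q_{d}}w_{n,\mathbf{k},r}(\mathbf{x})\,d\mathbf{x}$ gives
\[
\left\Vert K_{n,r}^{d}(h)\right\Vert _{p}^{p}\leq \frac{1}{\left( n-r+2\right) ^{d}}\sum_{\mathbf{k}\in \left\{ 0,\ldots ,n\right\} ^{d}}\int_{Q_{d}}\left\vert h\left( \frac{\mathbf{k}+\mathbf{u}}{n+1}\right) \right\vert ^{p}d\mathbf{u}.
\]
In each summand I would substitute $\mathbf{v}=(\mathbf{k}+\mathbf{u})/(n+1)$, which maps $Q_{d}$ onto the subcube $\prod_{i=1}^{d}\left[ k_{i}/(n+1),(k_{i}+1)/(n+1)\right]$ with $d\mathbf{u}=(n+1)^{d}\,d\mathbf{v}$; as $\mathbf{k}$ ranges over $\left\{ 0,\ldots ,n\right\} ^{d}$ these subcubes tile $Q_{d}$ up to null sets, so the sum collapses to $(n+1)^{d}\left\Vert h\right\Vert _{p}^{p}$. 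Hence
\[
\left\Vert K_{n,r}^{d}(h)\right\Vert _{p}\leq \left( \frac{n+1}{n-r+2}\right) ^{d/p}\left\Vert h\right\Vert _{p},
\]
and since $(n+1)/(n-r+2)\to 1$ and stays bounded for $n>2r$, the constant $M:=\sup_{n>2r}\big( (n+1)/(n-r+2)\big) ^{d/p}$ is finite and independent of $n$.

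To finish, fix $f\in L^{p}(Q_{d})$ and $\varepsilon >0$, and choose $g\in C(Q_{d})$ with $\left\Vert f-g\right\Vert _{p}<\varepsilon$ (possible since $C(Q_{d})$ is dense in $L^{p}(Q_{d})$ and $Q_{d}$ has finite measure). Then
\[
\left\Vert K_{n,r}^{d}(f)-f\right\Vert _{p}\leq \left\Vert K_{n,r}^{d}(f-g)\right\Vert _{p}+\left\Vert K_{n,r}^{d}(g)-g\right\Vert _{p}+\left\Vert g-f\right\Vert _{p}\leq (M+1)\varepsilon +\left\Vert K_{n,r}^{d}(g)-g\right\Vert _{p}.
\]
The middle term tends to $0$ because $K_{n,r}^{d}(g)\to g$ uniformly by Theorem~\ref{Unif. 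Approx.} and $Q_{d}$ has finite measure, so $\limsup_{n}\left\Vert K_{n,r}^{d}(f)-f\right\Vert _{p}\leq (M+1)\varepsilon$; letting $\varepsilon \to 0$ gives the claim. The main obstacle is the uniform boundedness step, specifically the careful bookkeeping of the double Jensen inequality together with the tiling substitution that produces the balancing factors $(n+1)^{d}$ and $(n-r+2)^{-d}$ whose ratio remains bounded; the remaining density-and-triangle-inequality packaging is routine.
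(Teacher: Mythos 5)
Your proposal is correct and takes essentially the same route as the paper's own proof: Jensen's inequality via convexity of $t\mapsto\left\vert t\right\vert^{p}$, the bound $\int_{Q_{d}}w_{n,\mathbf{k},r}(\mathbf{x})\,d\mathbf{x}\leq (n-r+2)^{-d}$ from (\ref{int.of Stancu pol. over hypercube}), the change of variables onto the tiling subcubes $Q_{n,\mathbf{k}}$ producing the factor $\left(\frac{n+1}{n-r+2}\right)^{d}$, and then density of $C(Q_{d})$ together with Theorem \ref{Unif. Approx.}. The only cosmetic differences are that you write out the $\varepsilon$-and-triangle-inequality step explicitly and dispense with the paper's case split between $r\in\{0,1\}$ and $r>1$, since your supremum bound $M:=\sup_{n>2r}\left(\frac{n+1}{n-r+2}\right)^{d/p}$ covers all $r$ at once (the paper's explicit value $M_{r}=\left(\frac{2r+2}{r+3}\right)^{d}$ for $r>1$ is just this supremum evaluated at $n=2r+1$).
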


\begin{proof}
Since the cases $r=0\ $and $1$ correspond to the multivariate Kantorovich
operators (see \cite{Altomare et al. 2010} or \cite{Altomare et al. 2017}),
we consider only the cases $r>1$, which is taken as fixed. From Theorem \ref%
{Unif. Approx.}, we obtain that $\lim \limits_{n\rightarrow \infty
}\left
\Vert K_{n,r}^{d}(f)-f\right \Vert _{p}=0\ $for any $f\in C\left(
Q_{d}\right) $.$\ $Since $C\left( Q_{d}\right) \ $is dense in $L^{p}\left(
Q_{d}\right) $, denoting the norm of the operator $K_{n,r}^{d}$ acting on $%
L^{p}\left( Q_{d}\right) \ $onto itself$\ $by $\left \Vert
K_{n,r}^{d}\right
\Vert $, it remains to show that there exists an $M_{r}$%
,\ where $M_{r}$ is a positive constant that maybe depends on $r$, such that 
$\left \Vert K_{n,r}^{d}\right \Vert \leq M_{r}\ $for all $n>2r$. Now, as in 
\cite[p.604]{Altomare et al. 2017}, we adopt the notation%
\begin{equation*}
Q_{n,\mathbf{k}}:=\prod \limits_{i=1}^{d}\left[ \frac{k_{i}}{n+1},\frac{%
k_{i}+1}{n+1}\right] \subset Q_{d};\  \bigcup \limits_{\mathbf{k}\in \left \{
0,\ldots ,n\right \} ^{d}}Q_{n,\mathbf{k}}=Q_{d}.
\end{equation*}%
Making use of the convexity of the function $\varphi \left( t\right)
:=\left
\vert t\right \vert ^{p},\ t\in 
\mathbb{R}
,\ 1\leq p<\infty \ $(see, e.g., \cite{Altomare}), and (\ref{Stancu dist.}),
for every $f\in L^{p}\left( Q_{d}\right) ,\ n>2r$, and $\mathbf{x}\in Q_{d}$%
, we obtain%
\begin{eqnarray*}
\left \vert K_{n,r}^{d}\left( f;\mathbf{x}\right) \right \vert ^{p} &\leq
&\sum \limits_{\mathbf{k}\in \left \{ 0,\ldots ,n\right \} ^{d}}w_{n,\mathbf{%
k},r}(\mathbf{x})\int \limits_{Q_{d}}\left \vert f\left( \frac{\mathbf{k}+%
\mathbf{u}}{n+1}\right) \right \vert ^{p}d\mathbf{u} \\
&=&\sum \limits_{\mathbf{k}\in \left \{ 0,\ldots ,n\right \} ^{d}}w_{n,%
\mathbf{k},r}(\mathbf{x})\left( n+1\right) ^{d}\int \limits_{Q_{n,\mathbf{k}%
}}\left \vert f\left( \mathbf{v}\right) \right \vert ^{p}d\mathbf{v}.
\end{eqnarray*}%
Taking (\ref{int.of Stancu pol. over hypercube}) into consideration, we
reach to%
\begin{equation*}
\int \limits_{Q_{d}}\left \vert K_{n,r}^{d}\left( f;\mathbf{x}\right) \right
\vert ^{p}d\mathbf{x}\leq \sum \limits_{\mathbf{k}\in \left \{ 0,\ldots
,n\right \} ^{d}}\left( \frac{n+1}{n-r+2}\right) ^{d}\int \limits_{Q_{n,%
\mathbf{k}}}\left \vert f\left( \mathbf{v}\right) \right \vert ^{p}d\mathbf{v%
}.
\end{equation*}%
Since $\sup \limits_{n>2r}\left( \frac{n+1}{n-r+2}\right) ^{d}=\left( \frac{%
2r+2}{r+3}\right) ^{d}:=M_{r}$\ for $r>1$, where $1<\frac{2r+2}{r+3}<2$, we
get 
\begin{equation*}
\int \limits_{Q_{d}}\left \vert K_{n,r}^{d}\left( f;\mathbf{x}\right) \right
\vert ^{p}d\mathbf{x}\leq M_{r}\int \limits_{Qd}\left \vert f\left( \mathbf{v%
}\right) \right \vert ^{p}d\mathbf{v},
\end{equation*}%
which implies that $\left \Vert K_{n,r}^{d}\left( f\right) \right \Vert
_{p}\leq M_{r}^{1/p}\left \Vert f\right \Vert _{p}$. Note that for the cases 
$r=0$ and $1$; we have $M_{r}=1\ $(see \cite{Altomare et al. 2017}).
Therefore, the proof is completed.
\end{proof}

\section{Estimates for the rate of convergence}

In \cite{Muller}, M\"{u}ller studied $L^{p}$-approximation by the sequence
of the Cheney-Sharma-Kantorovich operators (CSK). The author gave an
estimate for this approximation in terms of the univariate $\tau $-modulus
and moreover, using some properties of the $\tau $-modulus, he also obtained
upper estimates for the $L^{p}$-norm of the error of approximation for first
order differentiable functions as well as for continuous ones. In this part,
we show that similar estimates can also be obtained for $\left \Vert
K_{n,r}^{d}\left( f\right) -f\right \Vert _{p}\ $in the multivariate
setting. Our first result is an application of Quak's method in Theorem \ref%
{Quak Thm.}

\begin{theorem}
Let $r$ be a non-negative fixed integer, $f\in M\left( Q_{d}\right) \ $and $%
1\leq p<\infty $. Then 
\begin{equation}
\left \Vert K_{n,r}^{d}\left( f\right) -f\right \Vert _{p}\leq C\tau
_{1}\left( f,\sqrt[2d]{\frac{3n+1+3r\left( r-1\right) }{12\left( n+1\right)
^{2}}}\right) _{p}  \label{estimate via tau mod.}
\end{equation}%
for all $n\in 
\mathbb{N}
$ such that $n>2r$,$\ $where the positive constant $C$ does not depend on $f$%
.
\end{theorem}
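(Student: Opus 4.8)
The plan is to invoke Quak's estimate (Theorem \ref{Quak Thm.}) directly, so the entire task reduces to verifying its two hypotheses and computing the quantity $A$ explicitly. First I would check that the operator $K_{n,r}^{d}$ satisfies the assumptions of Theorem \ref{Quak Thm.}: it must be a positive linear operator preserving constants. Positivity and linearity are already noted in the text following \eqref{Mvariate S-K}, and the identity $K_{n,r}^{d}(\mathbf{1};\mathbf{x})=1$ is the first line of Lemma \ref{coordinate funcs.}, so constant preservation is immediate. Thus Quak's theorem applies and yields $\left\Vert K_{n,r}^{d}(f)-f\right\Vert_{p}\leq C\,\tau_{1}\left(f,\sqrt[2d]{A}\right)_{p}$ with $C$ independent of $f$, provided $A\leq 1$.

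The core computation is to evaluate
\[
A=\sup\left\{K_{n,r}^{d}\left((pr_{i}\circ\psi_{\mathbf{x}})^{2};\mathbf{x}\right):i=1,\ldots,d,\ \mathbf{x}\in Q_{d}\right\},
\]
where $\psi_{\mathbf{x}}(\mathbf{y})=\mathbf{y}-\mathbf{x}$, so that $(pr_{i}\circ\psi_{\mathbf{x}})^{2}(\mathbf{y})=(y_{i}-x_{i})^{2}$. I would expand this as $(pr_{i}-x_{i}\mathbf{1})^{2}=pr_{i}^{2}-2x_{i}\,pr_{i}+x_{i}^{2}\mathbf{1}$ and apply linearity together with the three moment formulas in Lemma \ref{coordinate funcs.}. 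Substituting $K_{n,r}^{d}(\mathbf{1};\mathbf{x})=1$, $K_{n,r}^{d}(pr_{i};\mathbf{x})=\frac{n}{n+1}x_{i}+\frac{1}{2(n+1)}$, and the given expression for $K_{n,r}^{d}(pr_{i}^{2};\mathbf{x})$, the second moment about $x_{i}$ collapses after the quadratic-in-$x_{i}$ terms cancel. A routine simplification should produce a closed form of the shape $\frac{A_{1}(n,r)\,x_{i}(1-x_{i})+A_{2}(n)}{(n+1)^{2}}$; maximizing the central second moment over $x_{i}\in[0,1]$ (the $x_{i}(1-x_{i})$ part attains its maximum $\tfrac14$ at $x_{i}=\tfrac12$) should yield exactly the radius appearing inside the $\tau$-modulus in \eqref{estimate via tau mod.}, namely $A=\frac{3n+1+3r(r-1)}{12(n+1)^{2}}$. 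Since this bound is the same for every coordinate $i$, the supremum over $i$ is redundant.

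The step I expect to require the most care is confirming that the supremum over $\mathbf{x}\in Q_{d}$ of the central second moment is genuinely attained at $x_{i}=\tfrac12$ and that it collapses to the stated radius, and separately verifying $A\leq1$ so that Quak's hypothesis $A\leq1$ is met. For the latter, I would argue that for all $n>2r$ the numerator $3n+1+3r(r-1)$ is dominated by $12(n+1)^{2}$; since $r<n/2$ gives $3r(r-1)<\tfrac34 n^{2}$, the numerator is $O(n^{2})$ with a small constant, comfortably below $12(n+1)^{2}$, so $A\leq1$ holds. Once $A$ is identified and the inequality $A\leq1$ checked, the claimed estimate \eqref{estimate via tau mod.} follows immediately by inserting $\sqrt[2d]{A}$ into Quak's bound; no further argument is needed, the potential pitfall being purely the bookkeeping in the algebraic simplification of the second moment.
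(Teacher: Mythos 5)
Your proposal is correct and follows essentially the same route as the paper: apply Quak's theorem after checking positivity, linearity and preservation of constants, expand $\left( pr_{i}\circ \psi _{\mathbf{x}}\right) ^{2}=pr_{i}^{2}-2x_{i}\,pr_{i}+x_{i}^{2}\mathbf{1}$ and use Lemma \ref{coordinate funcs.} to get the central second moment $\frac{n-1+r\left( r-1\right) }{\left( n+1\right) ^{2}}x_{i}\left( 1-x_{i}\right) +\frac{1}{3\left( n+1\right) ^{2}}$, maximize $x_{i}\left( 1-x_{i}\right)$ at $\tfrac14$ to identify $A_{n,r}=\frac{3n+1+3r\left( r-1\right) }{12\left( n+1\right) ^{2}}$, and verify $A_{n,r}\leq 1$ for $n>2r$. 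The only detail worth noting is that your equality claim for $A$ is fine precisely because the coefficient $n-1+r\left( r-1\right)$ is nonnegative, so the supremum over $\mathbf{x}\in Q_{d}$ is genuinely attained at $x_{i}=\tfrac12$, exactly as in the paper's computation.
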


\begin{proof}
According to Theorem \ref{Quak Thm.}; by taking $\psi _{\mathbf{x}}\left( 
\mathbf{y}\right) =\mathbf{y-x}\ $for fixed $\mathbf{x}\in Q_{d}\ $and for
every $\mathbf{y}\in Q_{d}$, and defining 
\begin{equation*}
A_{n,r}:=\sup \left \{ K_{n,r}^{d}\left( \left( pr_{i}\circ \psi _{\mathbf{x}%
}\right) ^{2};\mathbf{x}\right) :i=1,\ldots ,d,\  \mathbf{x}\in Q_{d}\right
\} ,
\end{equation*}%
where $\left( pr_{i}\circ \psi _{\mathbf{x}}\right)
^{2}=pr_{i}^{2}-2x_{i}pr_{i}+x_{i}^{2}\mathbf{1,}\ i=1,\ldots ,d$, we get
the following estimate%
\begin{equation*}
\left \Vert K_{n,r}^{d}\left( f\right) -f\right \Vert _{p}\leq C\tau
_{1}\left( f;\sqrt[2d]{A_{n,r}}\right)
\end{equation*}%
for any $f\in M\left( Q_{d}\right) $, under the condition that $A_{n,r}\leq
1 $. Now, applying the operators $K_{n,r}^{d}$ and making use of Lemma \ref%
{coordinate funcs.}, for every $i=1,\ldots ,d$ and $\mathbf{x}\in Q_{d}$, we
obtain 
\begin{eqnarray*}
K_{n,r}^{d}\left( \left( pr_{i}\circ \psi _{\mathbf{x}}\right) ^{2};\mathbf{x%
}\right) &=&\frac{n-1+r\left( r-1\right) }{\left( n+1\right) ^{2}}%
x_{i}\left( 1-x_{i}\right) +\frac{1}{3\left( n+1\right) ^{2}} \\
&\leq &\frac{n-1+r\left( r-1\right) }{4\left( n+1\right) ^{2}}+\frac{1}{%
3\left( n+1\right) ^{2}} \\
&=&\frac{3n+1+3r\left( r-1\right) }{12\left( n+1\right) ^{2}}
\end{eqnarray*}%
for all $n\in 
\mathbb{N}
$ such that$\ n>2r$, where $r\in 
\mathbb{N}
\cup \left \{ 0\right \} $. Therefore, since we have $n\geq 2r+1$, we take $%
r\leq \frac{n-1}{2}$ and obtain that $A_{n,r}\leq \frac{3n+1+3r\left(
r-1\right) }{12\left( n+1\right) ^{2}}\leq 1$ is satisfied, which completes
the proof.
\end{proof}

Now, making use of the properties $\tau 1)$-$\tau 3)\ $of the multivariate
first order $\tau $-modulus, we obtain

\begin{theorem}
\label{Estimate for smooth func.}Let $r$ be a non-negative fixed integer, $%
f\in L^{p}\left( Q_{d}\right) ,\ 1\leq p<\infty $,\ and $D^{\mathbf{\alpha }%
}f\in L^{p}\left( Q_{d}\right) \ $for all multi-indices $\mathbf{\alpha }\ $%
with $\left \vert \mathbf{\alpha }\right \vert \geq 1,\  \alpha _{i}=0$ or $1$%
. Then 
\begin{equation*}
\left \Vert K_{n,r}^{d}\left( f\right) -f\right \Vert _{p}\leq 2C_{r}\sum
\limits_{\left \vert \mathbf{\alpha }\right \vert \geq 1}\left( \frac{1}{%
\sqrt[2d]{n+1}}\right) ^{\left \vert \mathbf{\alpha }\right \vert }\left
\Vert D^{\mathbf{\alpha }}f\right \Vert _{p},
\end{equation*}%
for all $n\in 
\mathbb{N}
$ such that $n>2r$, where $C_{r}\ $is a positive constant depending on $r$.
\end{theorem}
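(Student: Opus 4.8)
The plan is to feed the step size produced by the preceding theorem into property $\tau_3)$ of the multivariate averaged modulus, thereby converting the single $\tau$-modulus term into the asserted sum of derivative norms. First I would invoke the estimate (\ref{estimate via tau mod.}), which for every $n>2r$ gives
\[
\left\Vert K_{n,r}^{d}(f)-f\right\Vert _{p}\leq C\,\tau _{1}\left( f,\delta _{n,r}\right) _{p},\qquad \delta _{n,r}:=\sqrt[2d]{\frac{3n+1+3r(r-1)}{12\left( n+1\right) ^{2}}}.
\]
The standing hypotheses on $f$ (namely $D^{\mathbf{\alpha }}f\in L^{p}(Q_{d})$ for all $\mathbf{\alpha }$ with $\left\vert \mathbf{\alpha }\right\vert \geq 1$ and $\alpha _{i}\in \{0,1\}$) place $f$ in the regularity class for which $\tau_3)$ applies, so that
\[
\tau _{1}\left( f,\delta _{n,r}\right) _{p}\leq 2\sum_{\left\vert \mathbf{\alpha }\right\vert \geq 1}\delta _{n,r}^{\left\vert \mathbf{\alpha }\right\vert }\left\Vert D^{\mathbf{\alpha }}f\right\Vert _{p}.
\]

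The arithmetic heart of the argument is to replace $\delta _{n,r}$ by a clean multiple of $1/\sqrt[2d]{n+1}$, uniformly in $n$. I would establish $\delta _{n,r}\leq c_{r}/\sqrt[2d]{n+1}$ for a constant $c_{r}$ depending only on $r$ (and the fixed dimension $d$). Raising to the power $2d$, this is equivalent to
\[
\frac{3n+1+3r(r-1)}{12\left( n+1\right) }\leq c_{r}^{2d},
\]
which I would verify by bounding $3n+1\leq 3(n+1)$ and then invoking the hypothesis $n>2r$, i.e. $n+1\geq 2(r+1)$, to obtain
\[
\frac{3n+1+3r(r-1)}{12\left( n+1\right) }\leq \frac{1}{4}+\frac{r(r-1)}{4\left( n+1\right) }\leq \frac{1}{4}+\frac{r(r-1)}{8\left( r+1\right) }=:c_{r}^{2d}.
\]
Hence $\delta _{n,r}^{\left\vert \mathbf{\alpha }\right\vert }\leq c_{r}^{\left\vert \mathbf{\alpha }\right\vert }\bigl(1/\sqrt[2d]{n+1}\bigr)^{\left\vert \mathbf{\alpha }\right\vert }$ for every admissible $\mathbf{\alpha }$.

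To finish, I would use that $\alpha _{i}\in \{0,1\}$ forces $1\leq \left\vert \mathbf{\alpha }\right\vert \leq d$, so the coefficient $c_{r}^{\left\vert \mathbf{\alpha }\right\vert }$ is dominated by the single constant $\max \{c_{r},c_{r}^{d}\}$ (covering both $c_{r}\geq 1$ and $c_{r}<1$). Absorbing this together with $C$ into $C_{r}:=C\max \{c_{r},c_{r}^{d}\}$ and keeping the factor $2$ from $\tau_3)$ explicit yields exactly the claimed bound $\left\Vert K_{n,r}^{d}(f)-f\right\Vert _{p}\leq 2C_{r}\sum_{\left\vert \mathbf{\alpha }\right\vert \geq 1}\bigl(1/\sqrt[2d]{n+1}\bigr)^{\left\vert \mathbf{\alpha }\right\vert }\left\Vert D^{\mathbf{\alpha }}f\right\Vert _{p}$. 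I expect the main obstacle to be the uniform step-size estimate: one must ensure that the fixed term $3r(r-1)$ stays controlled relative to $n+1$, and this is precisely where the hypothesis $n>2r$ is essential. A secondary point requiring a brief remark is the matching of regularity classes, since (\ref{estimate via tau mod.}) and $\tau_3)$ are phrased for $f\in M(Q_{d})$ while the statement is phrased in $L^{p}(Q_{d})$; here the differentiability assumptions on $f$ serve to legitimize the use of both ingredients.
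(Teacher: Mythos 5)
Your proof is correct, and its skeleton matches the paper's: both start from the estimate (\ref{estimate via tau mod.}), both exploit $n>2r$ (hence $n+1\geq 2(r+1)$) to bound the step uniformly by a constant multiple of $(n+1)^{-1/(2d)}$, and both finish via property $\tau_{3})$. The one genuine difference is the order of operations and, with it, which tool controls the constant. The paper first normalizes the step \emph{inside} the $\tau$-modulus, writing $\delta_{n,r}\leq \sqrt[2d]{B_{r}}\,(n+1)^{-1/(2d)}$ with $B_{r}=\frac{3r^{2}+3r+4}{24(r+1)}$ for $r>1$ and $B_{r}=\frac{1}{4}$ for $r=0,1$, and then invokes monotonicity $\tau_{1})$ together with the scaling property $\tau_{2})$ to extract the factor, paying the price $C_{r}=C\left(2\left\lfloor \sqrt[2d]{B_{r}}\right\rfloor +2\right)^{d+1}$. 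You instead apply $\tau_{3})$ immediately at step $\delta_{n,r}$ and only afterwards substitute $\delta_{n,r}\leq c_{r}(n+1)^{-1/(2d)}$ into the resulting explicit polynomial bound, absorbing $c_{r}^{\left\vert \mathbf{\alpha}\right\vert}$ via $1\leq \left\vert \mathbf{\alpha}\right\vert \leq d$ into $\max\{c_{r},c_{r}^{d}\}$; this bypasses $\tau_{2})$ entirely and yields the cleaner constant $C_{r}=C\max\{c_{r},c_{r}^{d}\}$, avoiding the $(d+1)$st-power blow-up of the scaling inequality. Your arithmetic checks out: $c_{r}^{2d}=\frac{1}{4}+\frac{r(r-1)}{8(r+1)}$ is only marginally larger than $B_{r}$, the discrepancy coming from bounding $3n+1\leq 3(n+1)$ where the paper writes $3n+1=3(n+1)-2$, and your case distinction on $c_{r}\gtrless 1$ for the maximum is needed since $c_{r}<1$ for small $r$ but $c_{r}>1$ for large $r$. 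Your closing caveat about the mismatch between $M(Q_{d})$ (where the $\tau$-modulus and (\ref{estimate via tau mod.}) live) and the $L^{p}(Q_{d})$ phrasing of the statement is apt; the paper passes over this point in silence and applies both ingredients to $f$ exactly as you do, so nothing the paper supplies is missing from your argument.
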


\begin{proof}
Since $n>2r$, we immediately have $n+1\geq 2\left( r+1\right) $.\ Thus, the
term appearing inside the $2d$th root in the formula (\ref{estimate via tau
mod.}) can be estimated, respectively, for $r>1$, and $r=0,1$,$\ $as 
\begin{eqnarray*}
\frac{3n+1+3r\left( r-1\right) }{12\left( n+1\right) ^{2}} &=&\frac{%
3n+3+3r\left( r-1\right) -2}{12(n+1)^{2}} \\
&=&\frac{1}{n+1}\left[ \frac{1}{4}+\frac{3r\left( r-1\right) -2}{12(n+1)}%
\right] \\
&\leq &\frac{1}{n+1}\left[ \frac{1}{4}+\frac{3r\left( r-1\right) -2}{24(r+1)}%
\right] \\
&=&\frac{1}{n+1}\left[ \frac{3r^{2}+3r+4}{24(r+1)}\right]
\end{eqnarray*}%
and 
\begin{equation*}
\frac{3n+1}{12\left( n+1\right) ^{2}}=\frac{1}{n+1}\frac{3n+1}{4\left(
3n+3\right) }<\frac{1}{4\left( n+1\right) }.
\end{equation*}%
Now, defining 
\begin{equation*}
B_{r}:=\left \{ 
\begin{array}{cc}
\frac{3r^{2}+3r+4}{24(r+1)}; & r>1, \\ 
\frac{1}{4}; & r=0,1,%
\end{array}%
\right.
\end{equation*}%
and making use of the properties $\tau _{1})$-$\tau _{3})$ of $\tau $%
-modulus, from (\ref{estimate via tau mod.}), we arrive at 
\begin{eqnarray*}
\left \Vert K_{n,r}^{d}\left( f\right) -f\right \Vert _{p} &\leq &C\tau
_{1}\left( f,\sqrt[2d]{\frac{3n+1+3r\left( r-1\right) }{12\left( n+1\right)
^{2}}}\right) _{p} \\
&\leq &C\tau _{1}\left( f,\sqrt[2d]{B_{r}}\frac{1}{\sqrt[2d]{n+1}}\right)
_{p} \\
&\leq &C\left( 2\left \lfloor \sqrt[2d]{B_{r}}\right \rfloor +2\right)
^{d+1}\tau _{1}\left( f,\frac{1}{\sqrt[2d]{n+1}}\right) _{p} \\
&\leq &2C_{r}\sum \limits_{\left \vert \mathbf{\alpha }\right \vert \geq
1}\left( \frac{1}{\sqrt[2d]{n+1}}\right) ^{\left \vert \mathbf{\alpha }%
\right \vert }\left \Vert D^{\mathbf{\alpha }}f\right \Vert _{p},
\end{eqnarray*}%
where\ the positive constant $C_{r}\ $is defined as $C_{r}:=C\left(
2\left
\lfloor \sqrt[2d]{B_{r}}\right \rfloor +2\right) ^{d+1}.$
\end{proof}

For non-differentiable functions we have the following estimate in terms of
the first order modulus of smoothness, in $L^{p}$-norm.

\begin{theorem}
Let $r$ be a non-negative fixed integer\ and $f\in L^{p}\left( Q_{d}\right)
,\ 1\leq p<\infty $. \ Then%
\begin{equation*}
\left \Vert K_{n,r}^{d}\left( f\right) -f\right \Vert _{p}\leq
c_{2}C_{r,p}\omega _{1}\left( f;\frac{1}{\sqrt[2d]{n+1}}\right) _{p},
\end{equation*}%
where $\omega _{1}\ $is the first order multivariate modulus of smoothness
of $f$ and $C_{r,p}$ is a constant depending on $r\ $and $p$.
\end{theorem}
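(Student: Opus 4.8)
The plan is to run the $K$-functional technique, reducing the bound for a general $f\in L^{p}\left( Q_{d}\right) $ to the smooth case and then invoking the equivalence (\ref{equivalence}). Put $t:=1/\sqrt[2d]{n+1}$, so that $t\leq 1$ and $t^{d}=\left( n+1\right) ^{-1/2}$. For an arbitrary $g\in W_{1}^{p}\left( Q_{d}\right) $ I would split
\begin{equation*}
K_{n,r}^{d}\left( f\right) -f=K_{n,r}^{d}\left( f-g\right) +\left( K_{n,r}^{d}\left( g\right) -g\right) +\left( g-f\right) ,
\end{equation*}
and estimate the outer two summands by the uniform operator bound $\left\Vert K_{n,r}^{d}\right\Vert \leq M_{r}^{1/p}$ proved in Theorem \ref{Lp-approx.}, which contributes $\left( M_{r}^{1/p}+1\right) \left\Vert f-g\right\Vert _{p}$. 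Everything then hinges on a first-order estimate for the middle term in terms of the seminorm $\left\vert g\right\vert _{W_{1}^{p}}=\sum_{\left\vert \mathbf{\alpha }\right\vert =1}\left\Vert D^{\mathbf{\alpha }}g\right\Vert _{p}$.

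Concretely, I would prove
\begin{equation*}
\left\Vert K_{n,r}^{d}\left( g\right) -g\right\Vert _{p}\leq C_{r}\,t\,\left\vert g\right\vert _{W_{1}^{p}},\qquad g\in W_{1}^{p}\left( Q_{d}\right) ,\qquad (\dagger )
\end{equation*}
By density of $C^{1}\left( Q_{d}\right) $ in $W_{1}^{p}\left( Q_{d}\right) $ and the boundedness of $K_{n,r}^{d}$ it suffices to treat $g\in C^{1}$. Using (\ref{Mvariate S-K}) and $K_{n,r}^{d}\left( \mathbf{1}\right) =1$, I would write $K_{n,r}^{d}\left( g;\mathbf{x}\right) -g\left( \mathbf{x}\right) $ as the $w_{n,\mathbf{k},r}\left( \mathbf{x}\right) $-weighted average of the increments $g\left( \tfrac{\mathbf{k}+\mathbf{u}}{n+1}\right) -g\left( \mathbf{x}\right) $, decompose each increment coordinate-by-coordinate by telescoping along the axes so that every piece becomes a one-dimensional integral of a single partial $D_{j}g$, and then apply Minkowski's integral inequality and Jensen's inequality. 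The surviving factor is the first absolute moment $K_{n,r}^{d}\left( \left\vert pr_{j}-x_{j}\right\vert ;\mathbf{x}\right) $, which by Cauchy--Schwarz is at most $\sqrt{K_{n,r}^{d}\left( \left( pr_{j}\circ \psi _{\mathbf{x}}\right) ^{2};\mathbf{x}\right) }\leq \sqrt{A_{n,r}}$; the second-moment bound $A_{n,r}\leq B_{r}/\left( n+1\right) $ established in the proof of Theorem \ref{Estimate for smooth func.} gives $\sqrt{A_{n,r}}\leq \sqrt{B_{r}}\,t^{d}\leq \sqrt{B_{r}}\,t$, whence $(\dagger )$.

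Combining the two bounds, for every $g\in W_{1}^{p}\left( Q_{d}\right) $,
\begin{equation*}
\left\Vert K_{n,r}^{d}\left( f\right) -f\right\Vert _{p}\leq \left( M_{r}^{1/p}+1\right) \left\Vert f-g\right\Vert _{p}+C_{r}\,t\,\left\vert g\right\vert _{W_{1}^{p}}\leq \widetilde{C}_{r,p}\left( \left\Vert f-g\right\Vert _{p}+t\,\left\vert g\right\vert _{W_{1}^{p}}\right) .
\end{equation*}
Taking the infimum over $g$ and using the definition (\ref{K-functional}) yields $\left\Vert K_{n,r}^{d}\left( f\right) -f\right\Vert _{p}\leq \widetilde{C}_{r,p}\,K_{1,p}\left( f;t\right) $, and the equivalence (\ref{equivalence}) then replaces the $K$-functional by $\omega _{1}\left( f;t\right) _{p}$; the equivalence constant displayed as $c_{2}$ in the statement appears here, while $C_{r,p}$ collects $\widetilde{C}_{r,p}$ and the remaining factors.

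The step I expect to be the main obstacle is $(\dagger )$. The naive radial expansion $g\left( \mathbf{y}\right) -g\left( \mathbf{x}\right) =\int _{0}^{1}\nabla g\left( \left( 1-s\right) \mathbf{x}+s\mathbf{y}\right) \cdot \left( \mathbf{y}-\mathbf{x}\right) ds$ is unusable here, since the change of variables $\mathbf{z}=\left( 1-s\right) \mathbf{x}+s\mathbf{y}$ produces a factor $\left( 1-s\right) ^{-d/p}$ whose $s$-integral diverges when $p\leq d$; the coordinate-wise telescoping circumvents this by keeping every inner integral one-dimensional with a harmless Jacobian, so that only first-order partials and the already-available moment bounds enter. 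A variant that also works is to feed a first-order tensor-product Steklov mean $f_{t}$ of $f$ (at scale $t$) into Theorem \ref{Estimate for smooth func.}, estimating its mixed derivatives $\left\Vert D^{\mathbf{\alpha }}f_{t}\right\Vert _{p}$ by $t^{-\left\vert \mathbf{\alpha }\right\vert }\omega _{1}\left( f;t\right) _{p}$ through integration by parts; this reaches the same conclusion directly in terms of $\omega _{1}$.
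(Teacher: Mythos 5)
Your proposal is correct in substance but takes a genuinely different route from the paper at the decisive step. The outer framework coincides: the paper also splits through $g$, uses the bound $\Vert K_{n,r}^{d}\Vert\leq M_{r}^{1/p}$ from Theorem \ref{Lp-approx.} for the rough part, passes to the infimum to reach $K_{1,p}$ as in (\ref{Estimate via K-func}), and invokes the equivalence (\ref{equivalence}). The difference is how the smooth part is handled: the paper does not prove your first-order bound $(\dagger)$; instead it recycles Theorem \ref{Estimate for smooth func.} (itself derived from the $\tau$-modulus estimate (\ref{estimate via tau mod.}) via properties $\tau_{1})$--$\tau_{3})$), whose hypothesis requires \emph{all} mixed partials $D^{\mathbf{\alpha}}g$ with $\alpha_{i}\in\{0,1\}$, $\vert\mathbf{\alpha}\vert\geq 1$; in the infimum step the paper then keeps only the $\vert\mathbf{\alpha}\vert=1$ terms and enlarges the competitor class to all of $W_{1}^{p}(Q_{d})$, appealing to ``the infimum of a superset does not exceed that of a subset'' --- a step where both modifications push the infimum \emph{downward}, i.e.\ in the wrong direction for an upper bound, so this passage really needs extra justification (e.g.\ exactly your $(\dagger)$, or your Steklov-mean variant, which is closer to M\"{u}ller's original method and feeds Theorem \ref{Estimate for smooth func.} a smoothed $f_{t}$ with $\Vert D^{\mathbf{\alpha}}f_{t}\Vert_{p}\lesssim t^{-\vert\mathbf{\alpha}\vert}\omega_{1}(f;t)_{p}$). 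So your direct telescoping-plus-moments argument buys a cleaner and logically tighter proof; moreover it is stronger than needed, since $\sqrt{A_{n,r}}\leq\sqrt{B_{r}}\,(n+1)^{-1/2}=\sqrt{B_{r}}\,t^{d}$ gives $K_{1,p}\bigl(f;c(n+1)^{-1/2}\bigr)$ and hence the rate $\omega_{1}\bigl(f;(n+1)^{-1/2}\bigr)_{p}$, better than the stated $\omega_{1}\bigl(f;(n+1)^{-1/(2d)}\bigr)_{p}$, which you then deliberately relax --- an improvement inaccessible to the paper because its only entry to the smooth case is Quak's $\sqrt[2d]{A}$. What the paper's route buys in exchange is economy: Theorem \ref{Estimate for smooth func.} is already on hand, so no new Jackson-type lemma must be proved. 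One caution: $(\dagger)$ is the one ingredient you have sketched rather than proved; the telescoping with H\"{o}lder on the one-dimensional inner integral (factor $\vert y_{j}-x_{j}\vert^{p-1}$), the coordinate-wise change of variables with Jacobian $(1-s)^{-1}$, and the split of the $s$-integral at $s=1/2$ needed when $p=1$ are standard (essentially the argument of Altomare et al.\ for Kantorovich-type operators) but must be written out before the proof is complete.
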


\begin{proof}
By Theorem \ref{Lp-approx.}, since $K_{n,r}^{d}\ $is bounded, with $%
\left
\Vert K_{n,r}^{d}\right \Vert _{p}\leq M_{r}^{1/p}$,\ for all $n\in 
\mathbb{N}
$ such that $n>2r$, we have $\left \Vert K_{n,r}^{d}\left( g\right)
-g\right
\Vert _{p}\leq \left( M_{r}^{1/p}+1\right) \left \Vert
g\right
\Vert _{p}\ $for $g\in L^{p}\left( Q_{d}\right) $. Moreover, from
Theorem \ref{Estimate for smooth func.}, we can write%
\begin{equation*}
\left \Vert K_{n,r}^{d}\left( g\right) -g\right \Vert _{p}\leq 2C_{r}\sum
\limits_{\left \vert \mathbf{\alpha }\right \vert \geq 1}\left( \frac{1}{%
\sqrt[2d]{n+1}}\right) ^{\left \vert \mathbf{\alpha }\right \vert }\left
\Vert D^{\mathbf{\alpha }}g\right \Vert _{p}
\end{equation*}%
for those $g\ $such that $D^{\mathbf{\alpha }}g\in L^{p}\left( Q_{d}\right) $%
, for all multi-indices $\mathbf{\alpha }\ $with $\left \vert \mathbf{\alpha 
}\right \vert \geq 1$ and$\  \alpha _{i}=0\ $or$\ 1$.\ Hence, for $f\in
L^{p}\left( Q_{d}\right) $, it readily follows that%
\begin{eqnarray*}
\left \Vert K_{n,r}^{d}\left( f\right) -f\right \Vert _{p} &\leq &\left
\Vert K_{n,r}^{d}\left( f-g\right) -\left( f-g\right) \right \Vert
_{p}+\left \Vert K_{n,r}^{d}\left( g\right) -g\right \Vert _{p} \\
&\leq &\left( M_{r}^{1/p}+1\right) \left \{ \left \Vert f-g\right \Vert
_{p}+2C_{r}\sum \limits_{\left \vert \mathbf{\alpha }\right \vert \geq
1}\left( \frac{1}{\sqrt[2d]{n+1}}\right) ^{\left \vert \mathbf{\alpha }%
\right \vert }\left \Vert D^{\mathbf{\alpha }}g\right \Vert _{p}\right \} .
\end{eqnarray*}%
Passing to the infimum for all $g\in W_{1}^{p}\left( Q_{d}\right) \ $in the
last formula, since the infimum of a superset does not exceed that of
subset, we obtain%
\begin{eqnarray}
&&\left \Vert K_{n,r}^{d}\left( f\right) -f\right \Vert _{p}  \notag \\
&\leq &\left( M_{r}^{1/p}+1\right) \inf \left \{ \left \Vert f-g\right \Vert
_{p}+\frac{2C_{r}}{\sqrt[2d]{n+1}}\sum \limits_{\left \vert \mathbf{\alpha }%
\right \vert =1}\left \Vert D^{\mathbf{\alpha }}g\right \Vert _{p}:g\in
W_{1}^{p}\left( Q_{d}\right) \right \}  \notag \\
&=&\left( M_{r}^{1/p}+1\right) \inf \left \{ \left \Vert f-g\right \Vert
_{p}+\frac{2C_{r}}{\sqrt[2d]{n+1}}\left \vert g\right \vert
_{W_{1}^{p}}:g\in W_{1}^{p}\left( Q_{d}\right) \right \}  \notag \\
&=&\left( M_{r}^{1/p}+1\right) K_{1,p}\left( f;\frac{2C_{r}}{\sqrt[2d]{n+1}}%
\right) ,  \label{Estimate via K-func}
\end{eqnarray}%
where $K_{1,p}\ $is the $K$-functional given by (\ref{K-functional}). The
proof follows from the equivalence (\ref{equivalence}) of the $K$-functional
and the first order modulus of smoothness in $L^{p}$-norm and the
non-decreasingness property of the modulus. Indeed, we get\ 
\begin{eqnarray}
K_{1,p}\left( f;\frac{2C_{r}}{\sqrt[2d]{n+1}}\right) &\leq &c_{2}\omega
_{1}\left( f;\frac{2C_{r}}{\sqrt[2d]{n+1}}\right) _{p}  \notag \\
&\leq &c_{2}\left( 2C_{r}+1\right) \omega _{1}\left( f;\frac{1}{\sqrt[2d]{n+1%
}}\right) _{p}.  \label{Upper estimate via mod of cont}
\end{eqnarray}%
Combining (\ref{Upper estimate via mod of cont}) with (\ref{Estimate via
K-func}) and defining $C_{r,p}:=\left( M_{r}^{1/p}+1\right) \left(
2C_{r}+1\right) $, where $M_{r}^{1/p}\ $and $C_{r}\ $are the same as in
Theorems \ref{Lp-approx.} and \ref{Estimate for smooth func.}, respectively,
we obtain the desired result.
\end{proof}

\end{document}